\newcommand{\SE}{{\mathcal{E}}}
\newcommand{\SH}{{\mathcal{H}}}
\newcommand{\SI}{{\mathcal{I}}}
\newcommand{\SM}{{\mathcal{M}}}
\newcommand{\SN}{{\mathcal{N}}}
\newcommand{\SO}{{\mathcal{O}}}
\newcommand{\cS}{{\mathcal{S}}}
\newcommand{\SU}{{\mathcal{U}}}
\newcommand{\CC}{\mathbb{C}}
\newcommand{\VV}{\mathbb{V}}
\newcommand{\Spec}{\operatorname{Spec}}
\newcommand{\Pic}{\operatorname{Pic}}
\newcommand{\Jac}{\operatorname{Jac}}
\newcommand{\Sym}{\operatorname{Sym}}
\newcommand{\id}{\operatorname{id}}
\newcommand{\too}{\longrightarrow}
\newcommand{\wt}{\widetilde}
\newcommand{\tr}{\operatorname{tr}}
\newcommand{\GL}{\operatorname{GL}}
\newcommand{\limit}{\operatorname{lim}}
\newcommand{\Ker}{\operatorname{Ker}}
\DeclareMathOperator{\pdeg}{pardeg \,}      
\DeclareMathOperator{\pmu}{par\mu \,}       
\DeclareMathOperator{\PE}{ParEnd\,}
\DeclareMathOperator{\SPE}{SParEnd\,}
\newcommand{\op}{\operatorname}
\newtheorem{proposition}{Proposition}[section]
\newtheorem{theorem}[proposition]{Theorem}
\newtheorem{lemma}[proposition]{Lemma}
\theoremstyle{remark}
\newtheorem{remark}[proposition]{Remark}
\numberwithin{equation}{section}
\title[Torelli theorem for parabolic Higgs bundles]{Torelli
theorem for the moduli space of parabolic Higgs
bundles}
\date{}
\thanks{T.G. supported by the Spanish
Ministerio de Educaci\'on y Ciencia [MTM2007-63582] M.L. supported by a postdoctoral grant at MPIM-Bonn and the grant SFRH/BPD/27039/2006 by F.C.T. and C.M.U.P. (Portugal) through the programmes POCTI and POSI, with national and European
Community structural funds.}
\subjclass[2000]{Primary 14D22, Secondary 14D20}
\keywords{Parabolic Higgs bundles, Hitchin map, spectral curve}
\author[T. G\'omez]{Tomas L. G\'omez}
\address{Instituto de Ciencias Matem\'aticas (CSIC-UAM-UC3M-UCM),
Serrano 113bis, 28006 Madrid, Spain; and
Facultad de Ciencias Matem\'aticas,
Universidad Complutense de Madrid, 28040 Madrid, Spain}
\email{tomas.gomez@mat.csic.es}
\author[M. Logares]{Marina Logares}
\address{Departamento de Matematica Pura,
Facultade de Ciencias, Rua do Campo Alegre 687,
4169-007 Porto Portugal}
\email{mlogares@fc.up.pt}
\begin{document}

\begin{abstract}
In this article we extend the proof given by Biswas and G\'omez
\cite{BG} of a Torelli theorem for the moduli space of Higgs bundles
with fixed determinant, to the parabolic situation.
\end{abstract}

\maketitle

\section{Introduction}
The classical Torelli theorem says that if two smooth compact Riemann
surfaces have Jacobians which are isomorphic as polarized varieties,
then the Riemann surfaces are isomorphic.  In other words, a smooth
compact Riemann surface can be recovered from its principally
polarized Jacobian.  Analogously, in \cite{MN} Mumford and Newstead
proved that a smooth complex projective curve $X$ can be recovered
from the isomorphism class of $M^{2,\xi}_{X}$, the moduli space of
stable vector bundles over $X$ with rank $2$ and fixed determinant
$\xi$ of odd degree. This result was generalized by Narasimhan and
Ramanan in \cite{NR} where they extended it for any rank. They show
that the intermediate Jacobian of $M^{r,\xi}_{X}$ has a polarization
given by the positive generator of $\Pic(M^{r,\xi}_{X})$. This
polarized intermediate Jacobian is isomorphic to the Jacobian of the
curve $X$, so they reduce their proof to the classical Torelli
theorem.

Parabolic structures on holomorphic vector bundles were introduced by
Mehta and Seshadri in \cite{MS} inspired by the work of Weil. In
\cite{W} Weil studied the problem of characterizing fiber bundles
which arise from a representation of the fundamental group. The
parabolic situation corresponds to a representation of the fundamental
group of the complement of a finite set of marked points on a Riemann
surface with prescribed holonomy around the marked points. Mehta and
Seshadri constructed its moduli space using Geometric Invariant Theory
in \cite{MS}.

Higgs bundles appear when we consider non unitary representations of
the fundamental group, for instance representations into
$\GL(n,\CC)$. Hitchin started the study of those in \cite{H1} and gave
the moduli space the structure of a completely integrable Hamiltonian
system in \cite{H2}.

In \cite{BBB}, Balaji, del Ba\~no and Biswas prove a Torelli theorem
for parabolic bundles of rank $2$,
and in \cite{BG}, Biswas and G\'omez give a
Torelli theorem for Higgs bundles.  Our goal will be to provide the
moduli space of parabolic Higgs bundles with a Torelli type theorem.


In this article we prove the following (see
section \ref{sec:preliminaries} for notation):

\begin{theorem}
\label{maintheorem}
Let $X$ and $X'$ be Riemann surfaces of genus $g\ge 2$, let
$\SN_\xi(2,d,\alpha)$ and $\SN'_\xi(2,d,\alpha)$ be the moduli
spaces of parabolic Higgs bundles over $X$ and $X'$ respectively,
with fixed determinant $\xi$ of odd degree $d$. If
there is an isomorphism between $\SN_\xi(2,d,\alpha)$ and
$\SN'_\xi(2,d,\alpha)$ then there is an isomorphism between $X$ and
$X'$ sending the marked points at $X$ to the marked points at $X'$.
\end{theorem}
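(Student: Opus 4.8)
The classical approach to Torelli theorems of this kind, following Mumford–Newstead, Narasimhan–Ramanan, and the Higgs analogue of Biswas–G\'omez, is to recover the curve $X$ intrinsically from the geometry of the moduli space $\SN_\xi(2,d,\alpha)$. The plan is to exploit the Hitchin map, which endows $\SN_\xi(2,d,\alpha)$ with the structure of an integrable system. I would first describe the Hitchin map $h:\SN_\xi(2,d,\alpha)\to B$, where the base $B$ is the affine space of sections parametrizing spectral data (taking into account the parabolic structure at the marked points, so that the spectral curves acquire prescribed behaviour over the marked points). The fibers of $h$ are abelian varieties (Jacobians or Prym-type varieties of the spectral curves), and the generic spectral curve is a double cover of $X$ branched according to the parabolic/Hitchin data. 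The key point is that $X$ can be reconstructed from the family of spectral curves, and the latter is encoded in the Hitchin fibration.

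The heart of the argument is to show that the Hitchin map is \emph{intrinsic}, i.e.\ recoverable from the isomorphism class of the moduli space alone, without reference to the map $h$ itself. In the non-parabolic Higgs case, Biswas–G\'omez achieve this by characterizing the nilpotent cone (the fiber $h^{-1}(0)$) as the unique fiber that is not an abelian variety, and more precisely by identifying the Hitchin base with a distinguished linear system intrinsic to the variety. I would carry this over to the parabolic setting: given the abstract isomorphism $\SN_\xi(2,d,\alpha)\cong\SN'_\xi(2,d,\alpha)$, the aim is to prove that it carries the Hitchin map on one side to the Hitchin map on the other, up to a linear automorphism of the base. The main obstacle I anticipate is precisely this step, since the presence of parabolic structure modifies both the Hitchin base and the spectral correspondence: one must check that the weights $\alpha$ and the marked points do not destroy the intrinsic characterization of the Hitchin fibration, and that the generic fiber remains an abelian variety whose degeneration locus identifies the base canonically.

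Once the Hitchin map is recovered intrinsically, I would reconstruct the spectral curves from the fibers and then recover $X$ together with its marked points. Concretely, the generic spectral curve $\wt X$ is a branched double cover $\wt X\to X$; the branch points (equivalently the ramification data) detect the marked points, since the parabolic structure forces prescribed behaviour of the spectral cover over those points. By varying over the Hitchin base one obtains the family of all such covers, and a monodromy or incidence argument recovers $X$ as the quotient and simultaneously pins down the marked points as the locus where the spectral data degenerates in the parabolic-prescribed manner. This yields an isomorphism $X\cong X'$ matching the marked points, which is exactly the assertion of Theorem~\ref{maintheorem}.

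Throughout, I expect to rely on the genus hypothesis $g\ge 2$ (to guarantee that the moduli space has the expected dimension and that the generic spectral curve is smooth of genus large enough for the reconstruction to be unambiguous) and on the odd-degree, fixed-determinant hypothesis (which ensures stability equals semistability, so that $\SN_\xi(2,d,\alpha)$ is smooth and the fibration is well behaved). The decisive and most delicate step remains the intrinsic characterization of the Hitchin map in the parabolic case; the subsequent reconstruction of $(X,\text{marked points})$ from the spectral data should then follow the pattern of \cite{BG} with the modifications dictated by the parabolic weights.
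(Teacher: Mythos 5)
Your overall skeleton---recover the Hitchin fibration intrinsically, then extract the curve---matches the paper's strategy in spirit, but the two steps you flag as ``the key point'' are exactly where the paper has to work, and as you state them both would fail. First, recovering the Hitchin base as $\Spec(\Gamma(Y))$ is indeed straightforward (the fibers are compact by Lemma \ref{lem:Prym-fibers}), but what one obtains is an abstract affine variety: it carries no linear structure and no distinguished origin, since $\mathbb{A}^N$ has many non-linear automorphisms, so your phrase ``up to a linear automorphism of the base'' presupposes what must be proved. Moreover, your proposed intrinsic characterization of the nilpotent cone as ``the unique fiber that is not an abelian variety'' is false: every fiber over the discriminant locus (singular spectral curves) fails to be an abelian variety, and that locus has positive dimension, so this does not single out one fiber. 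The paper locates the origin by a different mechanism: it takes any $\CC^*$-action on the base with exactly one fixed point that admits a lift to $Y$ (the standard action provides one), and Proposition \ref{prop:origin} shows the fixed point of any such action must be the origin. That proof rests on the Kodaira--Spencer computation of Proposition \ref{kernel} ($\dim\Ker u_0=1$, so the kernel is spanned by the standard direction) together with an infinitesimal Torelli argument pushing a deformation of $X_s$ through $\Jac(X_s)$ and the \'etale cover $P_s\times J\to J_s$ to show that the tangent vector of any lifting action lies in $\Ker u_0$. Nothing in your proposal substitutes for this step.

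Second, your endgame genuinely diverges from the paper and is unproven. You propose to reconstruct the spectral curves from the fibers and read off $X$ and the marked points from branch loci and monodromy. But the abstract isomorphism only gives the fibers as varieties (Prym varieties of ramified double covers); to recover the spectral curve from a fiber you would need a Prym--Torelli-type statement together with control of polarizations under the unknown isomorphism, none of which you supply. (Your observation that the marked points sit in every branch divisor is correct---$s_2\in H^0(K^2D)\subset H^0(K^2D^2)$ vanishes on $D$---but it is moot without that spectral reconstruction.) The paper avoids all of this by staying inside the nilpotent cone: Proposition \ref{prop:novector} ($H^0(\SM_\xi,T_{\SM_\xi})=0$, via \cite{NR} and the flag-bundle fibration $f:\SM_\xi\to\cS\SU_\xi$) and Lemma \ref{lem:comp} (Simpson's Lemma 11.9 adapted) identify $\SM_\xi(2,d,\alpha)$ as the unique irreducible component of $h_0^{-1}(0)$ admitting no nontrivial $\CC^*$-action (Proposition \ref{prop:nilpotentcone}), and the parabolic Torelli theorem of \cite{BBB} then recovers $(X,x)$. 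That citation is the sole reason the theorem is restricted to rank $2$---a hypothesis your route never uses, which is a symptom that your final step is not the argument the statement is actually built on.
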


Note that we have to assume that the rank is $2$, because we
apply the Torelli theorem of \cite{BBB}.
Before proving the theorem, we need to do a detailed analysis of
the moduli space of parabolic Higgs bundles, and this is done
for any rank $r>1$ and degree $d$ coprime to $r$.

In \cite{BG}, it is shown that the moduli space of vector bundles,
which is embedded naturally inside the moduli space of Higgs bundles
as Higgs bundles with zero Higgs field,
can be characterized intrinsically. In other words, given the moduli
space of Higgs bundles as an abstract variety, we can find the
subvariety which corresponds to Higgs bundles with zero Higgs field,
and then we can apply
the Torelli theorem for the moduli space of vector
bundles.

Following the ideas of \cite{BG}, given the moduli space of parabolic
Higgs bundles as an abstract algebraic variety, we first recover the
Hitchin map. In section \ref{sec:origin} we use the Kodaira--Spencer
map and study the deformations of some objects related to the Hitchin
map to show that any $\CC^{\ast}$--action admitting a lift to the
moduli space of parabolic Higgs bundles,
has the origin as fixed point. Since the standard
$\CC^*$--action on the Hitchin space induced by sending a Higgs pair
$(E,\Phi)$ to $(E,t\Phi)$ has the origin as the unique fixed point, we
recover the origin of the Hitchin space.

By definition, the fiber over this point is the nilpotent cone,
and then we show that the
only irreducible component of the nilpotent cone which does not admit
a nontrivial $\CC^\ast$--action is the component corresponding to
parabolic bundles. Therefore, we have identified the moduli space of
parabolic bundles as a subvariety, and then we can apply \cite{BBB} to
recover the curve and the marked points.


\section{Preliminaries}
\label{sec:preliminaries}

Let $X$ be a smooth projective curve over $\CC$ of genus $g\ge 2$. Let
$D$ be a finite set of $n\geq 1$ distinct points of $X$. A
\emph{parabolic vector bundle} over $X$ is a holomorphic vector bundle
of rank $r$ (we assume $r\geq 2$)
together with a weighted flag on the fiber over each $x\in D$, called
\emph{parabolic structure}, that is
\begin{eqnarray*}
E&=E_{x,0}\supset E_{x,1}\supset\cdots\supset E_{x,r}\supset\{0\}\\
&0\le\alpha_{1}(x)< \cdots< \alpha_{r}(x)< 1 \, .
\end{eqnarray*}
The parabolic structure is said to have \emph{full flags} whenever
each step of the filtration has dimension one,
i.e. $\dim(E_{x,i}/E_{x,i+1})=1$. We denote
$\alpha=\{(\alpha_{1}(x),\ldots, \alpha_{r}(x))\}_{x\in D}$ to the
\emph{system of weights} corresponding to a fixed parabolic structure.

The \emph{parabolic degree} of a parabolic vector bundle is defined as
$$
\pdeg(E)=\deg(E)+ \sum _{i, x\in D} \alpha_{i}(x) \, ,
$$
and the \emph{parabolic slope} is then $\pmu(E)=\pdeg(E)/rk(E)$.

A parabolic bundle is said to be (semi)-stable if for all parabolic
subbundles, that is holomorphic subbundles $F\subset E$ with the
induced parabolic structure, the following condition for the parabolic
slope is satisfied
$$
\pmu(F)<\pmu(E) \qquad (\le) \, .
$$
The moduli space $\SM(r,d,\alpha)$ of semistable parabolic vector
bundles of rank $r$ and degree $d$ was constructed by Mehta and
Seshadri using Mumford's geometric invariant theory in \cite{MS}. They
also proved that $\SM(r,d,\alpha)$ is a normal projective variety of
dimension
$$
r^{2}(g-1)+1+ \frac{n(r^{2}-r)}{2} \, ,
$$
where the last summand comes from the fact that the flags we are
considering over each point of $D$ are full flags.
Moreover, it is smooth for a generic
choice of weights, where the system of weights $\alpha$ is
\emph{generic} whenever for such weights semistability implies
stability. From now on we assume the parabolic structure to have full
flags and generic weights.

Let $\xi$ be a line bundle on $X$. We denote $\SM_\xi$ the moduli
space of stable parabolic
vector bundles with fixed determinant. It is a projective scheme of dimension
$$
(g-1)(r^{2}-1)+\frac{n(r^{2}-r)}{2} \, .
$$

A \emph{parabolic Higgs bundle} $(E,\Phi)$ is a parabolic vector
bundle $E$ together with a homomorphism, called Higgs field,
$$
\Phi: E \too E\otimes K(D)
$$
such that it is a \emph{strongly parabolic} homomorphism, i.e. for
each point $x\in D$, the homomorphism induced in the fibers satisfies
\begin{eqnarray*}
\Phi(E_{x,i})\subset E_{x,i+1}\otimes K(D)|_x
\end{eqnarray*}
where $K$ is the canonical bundle over $X$, and $K(D)$ denotes the
line bundle $K\otimes \SO_{X}(D)$.

A parabolic Higgs bundle is called \emph{(semi)-stable} whenever the
slope condition holds for all $\Phi$-invariant parabolic subbundles,
that is $F\subset E$ parabolic subbundle, such that $\Phi(F)\subset
F\otimes K(D)$.

Denote $\SN(r,d,\alpha)$ the moduli space of stable rank $r$ degree
$d$ and weight type $\alpha$ parabolic Higgs bundles. It contains as
an open subset the cotangent bundle of the moduli space of stable
parabolic vector bundles and, under the assumption of genericity for
the weights, it is a smooth irreducible complex variety of dimension
$$
\dim \SN(r,d,\alpha)=r^{2}(2g-2)+2+n(r^{2}-r)
$$
where $n$ is the number of marked points on $X$.
Indeed, let $[E]\in \SM(r,d,\alpha)$. Denote by $\SPE(E)$ the
sheaf of strongly parabolic endomorphisms, i.e., endomorphisms
$\varphi:E\to E$ such that, for each point $x\in D$,
\begin{eqnarray*}
\varphi(E_{x,i})\subset E_{x,i+1} \; .
\end{eqnarray*}
Analogously, we say that an endomorphism is non-strongly parabolic
if it satisfies the weaker condition
\begin{eqnarray*}
\varphi(E_{x,i})\subset E_{x,i}
\end{eqnarray*}
and the sheaf of non-strongly parabolic endomorphisms is denoted
$\PE(E)$.
The tangent space at $[E]$ is isomorphic to
$H^{1}(X,\PE(E))$. By the parabolic version of Serre duality,
$$
H^{1}(X,\PE(E))^{\ast}\cong H^{0}(X,\SPE(E)\otimes K(D))
$$
and hence the Higgs field is an element of the cotangent space
$T^\ast_{[E]}\SM$, and the moduli space $\SM(r,d,\alpha)$ of stable
parabolic bundles is an open dense subset of the moduli
$\SN(r,d,\alpha)$ of parabolic Higgs bundles. The reader would like to
see \cite{Y,Y2,GGM} for references.

A trivial observation is that $\SM(r,d,\alpha)$ is embedded in
$\SN(r,d,\alpha)$, just take null Higgs fields.

Define the \emph{determinant map} from
$\SN(r,d,\alpha)$ to the cotangent
$T^{\ast}\Jac^{d}(X)$, which is canonically isomorphic to
$\Jac^{d}(X)\times H^0(X,K)$:
\begin{eqnarray*}
 \det:\SN(r,d,\alpha)&\to& \Jac^{d}(X)\times H^0(X,K),\\
(E,\Phi)&\mapsto& (\Lambda^r E, \tr \Phi) \, .
\end{eqnarray*}
This is well defined because, since $\Phi$ is strongly parabolic, its
trace actually lies in $H^0(X,K)\subset H^0(X,K(D))$.
Let $\xi$ be a
fixed line bundle of degree $d$. By definition, the fiber over $(\xi,
0)$ is called the moduli space of stable parabolic Higgs bundles with
fixed determinant $\xi$, i.e.
$$
\SN_{\xi}(r,d,\alpha):={\det}^{-1}(\xi,0) \, .
$$

Hence, for each $[(E,\Phi)]\in \SN_{\xi}(r,d,\alpha)$,  $\Phi$ is
a traceless $K$-valued meromorphic endomorphism, with simple poles in $D$,
and nilpotent residues respecting the parabolic filtration.

For fixed determinant $\xi$ it is
$$
\dim \SN_{\xi}(r,d,\alpha)=2 \dim \SM_{\xi}(r,d,\alpha)=
2(g-1)(r^{2}-1)+n(r^{2}-r) \, .
$$

\section{Hitchin map and spectral curves}\label{sec:hitchinmap}

We continue now defining the Hitchin map and the Hitchin space for
parabolic Higgs bundles. Let $S=\VV(K(D))$ be the total space of the
line bundle $K(D)$, let
$$
p:S=\underline{\Spec}\Sym^\bullet (K^{-1}\otimes \SO_{X}(D)^{-1}) \too X
$$
be the projection, and $x\in H^{0}(S,p^* (K(D)))$ be the
tautological section. The characteristic polynomial
of a Higgs field
$$
\det(x\cdot \id - p^* \Phi)=
x^r+\tilde{s_1} x^{r-1} + \tilde{s_2} x^{r-2} + \cdots + \tilde{s_r}
$$
defines sections $s_i\in H^0(X,K^iD^i)$, such that
$\tilde{s_{i}}=p^{\ast}s_{i}$ and $K^{i}D^{j}$ denotes the tensor
product of the $i$th power of $K$ with the $j$th power of the line
bundle associated to $D$.

We are assuming that
$\Phi$ is strongly parabolic,
therefore
the residue at each point of $D$ is nilpotent.
This implies
that the eigenvalues of $\Phi$ vanish
at $D$, i.e., for each $i>0$ the section $s_i$ belongs
to the subspace $H^{0}(X,K^i D^{i-1})$. We therefore define
the Hitchin space as
\begin{equation}
\SH = H^{0}(K) \oplus H^{0}(K^2D) \oplus \cdots \oplus H^0(K^rD^{r-1}).
\end{equation}

We also assume that $\Phi$ is traceless, i.e. $s_1=0$, and
then define the traceless Hitchin space as
\begin{equation}
\label{h0}
\SH_0 = H^0(K^2 D) \oplus \cdots \oplus H^0(K^r D^{r-1}) \, .
\end{equation}
Using Riemann--Roch, the parabolic Serre duality and the fact that
$\deg K^{n}D^{n-1}<0$ we obtain the dimensions of $\SH$ and
$\SH_{0}$. That is,
$$
\dim \SH_0=(g-1)(r^2-1)+\frac{n(r^2-r)}{2} \, ,
$$
which is equal to the dimension of $\SM_{\xi}$, and
$$
\dim \SH=(g-1)r^{2}+1+ \frac{n(r^{2}-r)}{2} \, ,
$$
which is equal to the dimension of $\SM(r,d,\alpha)$.

Taking the characteristic polynomial of a Higgs field
defines the Hitchin map
$$
h: \SN(r,d,\alpha) \too \SH
$$
which we can restrict to the moduli space of fixed determinant
$$
h_0: \SN_\xi(r,d,\alpha) \too \SH_0
$$
Given $s=(s_1,\ldots,s_r)\in \SH$, with
$$
s_i\in H^0(K^iD^{i-1}) \subset H^0(K^iD^i) \, ,
$$
we think of $s_i$ as a section of $K^iD^i$, and
then we define, as usual, the spectral
curve $X_s$ in $S$ as the zero scheme of the section of $p^*K^rD^r$
$$
\psi=x^r + \wt{s}_1 x^{r-1} + \wt{s}_2 x^{r-2} + \cdots + \wt{s}_r,
$$
where $\wt{s}_i=p^*s_i$. Denote by $\pi$ the restriction of the
projection $p$ to the spectral curve $X_s$
\begin{equation}
\label{eq:pi}
\pi: X_{s}=
\underline{\Spec}\big(\Sym^\bullet(K^{-1}D^{-1}/\SI)\big)\too X
\end{equation}
where $\SI$ is the ideal sheaf generated by the image of the homomorphism
$$
\begin{array}{rcl}
K^{-r}D^{-r}&\too &\Sym^\bullet(K^{-1}D^{-1})\\
\rule{0cm}{0.5cm}\alpha & \longmapsto & \alpha\sum_{i=0}^r s_i
\end{array}
$$
where we put $s_0=1$. From (\ref{eq:pi}) it follows that
\begin{equation}
\label{isopio}
\pi_*\SO_{X_s}=\SO_X  \oplus K^{-1}D^{-1}  \oplus K^{-2}D^{-2}
\cdots \oplus K^{-r+1}D^{-r+1}
\end{equation}

\begin{lemma}
\label{generic}
For $r\ge 2$ and $g\ge 2$ there is a dense open set in $\SH$ whose spectral
curve is smooth. The same holds for $\SH_0$.
\end{lemma}

\begin{proof}
  By remark $3.5$ in \cite{BNR} the set of sections $s$ such that the
scheme $X_s$ is smooth, is open, and it is nonempty whenever $K^r
D^{r-1}$ admits a section without multiple zeros.  It is known (IV
Corollary 3.2 \cite{H}) that, if $\deg(K^r D^{r-1})\ge 2g+1$,
then $K^r D^{r-1}$ is
very ample. It follows that, if $g\geq 2$, the line bundle
$K^r D^{r-1}$
has sections without multiple zeros.
\end{proof}

For $X_s$ smooth, the short exact sequences
$$
0 \too T_p \cong p^*K(D)  \too T_S \too p^* T_X \too 0,
$$
where $T_p$ denotes the relative tangent bundle for the
projection $p:S\too X$, and
$$
0 \too T_{X_s} \too T_S|_{X_s} \too N_{X_s/S} \too 0
$$
give
\begin{equation}
\label{iso0}
N_{X_s/S}\cong K_{X_s}\otimes \pi^* D \; .
\end{equation}
On the other hand,
\begin{equation}
\label{iso1}
N_{X_s/S}\cong \SO(X_s)|_{X_s}=p^* (K^r D^{r})|_{X_s}=\pi^* (K^r D^{r}),
\end{equation}
and the ramification line bundle of the projection $\pi:X_s\to X$
is
$$
\SO(R)=K_{X_s}\otimes\pi^* K^{-1}=\pi^*K^{r-1}D^{r-1}\; .
$$
The section
\begin{equation}\label{ramification}
\frac{\partial \psi}{\partial x}= r x^{r-1} + (r-1)\wt s_1 x^{r-2} +
\cdots + \wt s_{r-1} \;\in\; H^0(S,p^*K^{r-1}D^{r-1}),
\end{equation}
when restricted to $X_s$, gives a section of $\SO(R)$, and its
scheme of zeroes is exactly the ramification divisor $R$.

\begin{lemma}\label{lem:Prym-fibers}
If $X_s$ is smooth, then the fiber $h^{-1}(s)$ is
isomorphic to
$$
\op{Prym}(X_s/X)=\big\{L\in \Pic(X): \det\pi_*L\cong \xi\big\}
$$
\end{lemma}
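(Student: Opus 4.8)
The plan is to identify the fiber $h^{-1}(s)$ over a point $s\in\SH$ with smooth spectral curve $X_s$ via the standard spectral correspondence, adapted to the parabolic setting. First I would recall the key point behind the BNR-type construction: a Higgs bundle $(E,\Phi)$ whose characteristic polynomial equals $\psi$ corresponds to a torsion-free rank-one sheaf $L$ on the spectral curve $X_s$, via $E=\pi_*L$ with the Higgs field $\Phi$ recovered as the pushforward of multiplication by the tautological section $x$. Since $X_s$ is smooth, torsion-free rank-one sheaves are just line bundles, so $L\in\Pic(X_s)$. Conversely, given any $L\in\Pic(X_s)$, the pair $(\pi_*L,\pi_*(x\cdot -))$ is a Higgs bundle on $X$ whose characteristic polynomial is $\psi$ by construction. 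The first task is therefore to check that this correspondence is a bijection and in fact an isomorphism of the fiber with an appropriate subvariety of $\Pic(X_s)$.

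The next step is to match up the three pieces of data that cut out $\SN_\xi(r,d,\alpha)$ inside the space of all Higgs bundles with the corresponding constraints on $L$. The fixed-determinant condition $\det(\pi_*L)\cong\xi$ is exactly the defining equation of $\op{Prym}(X_s/X)$, so I would verify the formula for $\det\pi_*L$ in terms of $\pi$ and $L$ (using the Grothendieck--Riemann--Roch or the explicit decomposition, noting that $\det\pi_*\SO_{X_s}$ is determined by $(\ref{isopio})$) and confirm that fixing it picks out the Prym variety as stated. The strong parabolicity and the prescribed flags at the marked points must be translated into conditions on $L$ near the points of $\pi^{-1}(D)$: because $s_i\in H^0(K^iD^{i-1})$ rather than $H^0(K^iD^i)$, the spectral curve meets the zero section in a controlled way over $D$, and the nilpotency of the residues is automatically encoded in the behaviour of $L$ there. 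I would argue that these parabolic conditions, rather than cutting out a proper subvariety of the Prym, are precisely what make the spectral data a line bundle on $X_s$ with no further restriction, so that the whole of $\op{Prym}(X_s/X)$ is swept out.

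Finally I would address stability: the fiber is the moduli of stable parabolic Higgs bundles, so in principle one should show that every $L\in\op{Prym}(X_s/X)$ yields a stable $(\pi_*L,\Phi)$. The standard fact here is that when $X_s$ is irreducible (which smoothness together with the integrality of $\psi$ guarantees), any $\Phi$-invariant subbundle of $\pi_*L$ comes from a subsheaf supported on a proper subscheme of $X_s$, of which there are none; hence $(\pi_*L,\Phi)$ has no proper $\Phi$-invariant parabolic subbundle and is automatically stable. This makes the spectral correspondence a morphism onto the entire Prym, and I would then check it is an isomorphism of varieties by noting it has an algebraic inverse (the pushforward and its reverse are both algebraic families).

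The hard part will be handling the parabolic structure carefully at the marked points. In the non-parabolic case the spectral correspondence is classical, but here the subtlety that $s_i$ lives in $H^0(K^iD^{i-1})$ forces the spectral curve to pass through specific points over $D$, and one must check that the resulting line bundles on $X_s$ correspond exactly to strongly parabolic Higgs fields with flags of the prescribed type, with \emph{no} leftover constraint on $L$. Verifying that this local analysis at $D$ produces precisely $\op{Prym}(X_s/X)$ and not some torsor or proper subvariety over it is where the genuine work lies.
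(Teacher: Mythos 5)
Your proposal is correct and follows essentially the same route as the paper: the BNR spectral correspondence $E=\pi_*L$, $\Phi=$ multiplication by the tautological section $x$, with the fixed determinant cutting out $\op{Prym}(X_s/X)$, and the parabolic data imposing no extra constraint on $L$. The ``hard part'' you flag is exactly what the paper's proof carries out: since the $s_i$ vanish on $D$, locally $E|_p\cong\CC[x]/(x^r)$ at each marked point $p$, and multiplication by $x$ is nilpotent with one-dimensional graded pieces, so it canonically defines the full flag, which by the definition of a strongly parabolic Higgs bundle must coincide with the parabolic filtration.
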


\begin{proof}
Since $X_s$ is smooth, it is reduced, and hence
the vector bundle $E$ is of the
form $\pi_*L$ with $L$ an element of
$\op{Prym}(X_s/X)$. The morphism $\Phi$ is
given by multiplication by the tautological
section $x$, and it only remains to show that
the parabolic structure can be recovered from
$\Phi$.

For each parabolic point $p\in D$ there is a
Zariski open subset of $X$ where the vector bundle
is of the form
$E|_U=\SO_U[x]/I$ (seen as an $\SO_U$-module), where
$$
I=(x^r + s_{2}x^{r-2} + \cdots + s_r)
$$
Recall that
$s_i$ vanishes at $p$ for all $i$. Hence, the
fiber of $E$ over $p$ is
$$
E|_p = \CC[x]/(x^r).
$$
On the other hand $\Phi|_x$ is given
by multiplication by $x$, and hence it defines
a full flag on $E|_p$, which, by definition of
parabolic Higgs bundle, must coincide with the
parabolic filtration. Therefore, the parabolic
structure is recovered by $\Phi$.
\end{proof}
\begin{remark}
  Although this is written under the assumption of $\Phi$ being
  strongly parabolic it also works for non-strongly parabolic Higgs
  field. Obtaining, in such a case, the Jacobian of the spectral curve
  instead of the Prym variety (see \cite{LM}).
\end{remark}

\section{The nilpotent cone}\label{sec:nilpotentcone}

The fiber $h^{-1}(0)$ is called the nilpotent cone, and
it is a Lagrangian subscheme \cite[Thm. 3.14]{GGM}.
It follows that $h_{0}^{-1}(0)$ is also a Lagrangian
subscheme. Indeed, it is obviously isotropic, since it is
a subscheme of an isotropic variety, and the dimension
of its components is $\dim h^{-1}(0)-g$,
because it is the preimage in
$h^{-1}(0)$ of $(\xi, 0)\in T^{\ast}\Jac^{d}(x)$ under the determinant
map, and all the fibers of this map are isomorphic.
We call it the nilpotent cone for traceless fields.

Note that, if $(E,\Phi)$ belongs to the nilpotent cone, then
$\Phi$ is a nilpotent homomorphism (hence the name).
The moduli space of parabolic bundles is embedded in the nilpotent
cone as an irreducible component, via the map $E\mapsto (E,0)$.
In general there are other irreducible components corresponding
to nonvanishing nilpotent homomorphisms, and we are going to
see that the moduli space of parabolic bundles can be characterized
as the unique irreducible component of the nilpotent cone which
does not admit a non-trivial $\CC^*$--action. To show that it
does not admit a non-trivial $\CC^*$--action, it is enough to
show that $H^0(\SM_\xi,T_{\SM_\xi})=0$, since a non-trivial
$\CC^*$--action produces a non-zero vector field.

In \cite{NR} Narasimhan and Ramanan prove that if $\cS\SU_{\xi}$ is
the moduli space of stable vector bundles with fixed determinant, then
$H^0(\cS\SU_{\xi}, T_{\cS\SU_{\xi}})=0$.
We prove the following

\begin{proposition}\label{prop:novector}
Assume that the parabolic weights are generic (so that parabolic
semistable implies parabolic stable, and the moduli space is smooth)
and small enough so that the
stability of the parabolic Higgs bundle is equivalent to the stability
of the underlying vector bundle. Then,
$H^0(\SM_\xi,T_{\SM_{\xi}})=0$.
\end{proposition}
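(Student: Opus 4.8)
The plan is to reduce the vanishing $H^0(\SM_\xi, T_{\SM_\xi})=0$ to the known vanishing on the moduli space of vector bundles with fixed determinant, which Narasimhan and Ramanan established for $\cS\SU_\xi$ and which is quoted in the excerpt. The hypothesis that the weights are generic and small enough to make parabolic stability equivalent to stability of the underlying vector bundle is precisely what allows this comparison: under it, a parabolic bundle is parabolic stable exactly when its underlying bundle is stable, so the forgetful map $[E]\mapsto[\text{underlying }E]$ sends $\SM_\xi$ to (an open subset of) $\cS\SU_\xi$ and realizes $\SM_\xi$ as a fiber bundle over $\cS\SU_\xi$ whose fiber over a bundle $E$ is the variety of full parabolic flags in the fibers $E|_x$ for $x\in D$, i.e.\ a product of full flag varieties $\prod_{x\in D}\Flag(E|_x)$.

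First I would make this flag-bundle structure precise: let $q\colon\SM_\xi\too\cS\SU_\xi$ be the forgetful morphism, a smooth proper map with fibers isomorphic to a product of flag varieties $\Flag(r)$. The relative tangent bundle $T_q$ of this fibration fits into the short exact sequence
\[
0\too T_q \too T_{\SM_\xi}\too q^*T_{\cS\SU_\xi}\too 0,
\]
and I would take global sections. The term $H^0(\SM_\xi, q^*T_{\cS\SU_\xi})$ is controlled by $H^0(\cS\SU_\xi, T_{\cS\SU_\xi}\otimes q_*\SO_{\SM_\xi})$; since $q$ is proper with connected fibers (flag varieties are projective and connected, so $q_*\SO_{\SM_\xi}=\SO_{\cS\SU_\xi}$), this is just $H^0(\cS\SU_\xi, T_{\cS\SU_\xi})$, which vanishes by \cite{NR}. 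It therefore suffices to show that the vertical vector fields vanish, i.e.\ $H^0(\SM_\xi, T_q)=0$.

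For the vertical part, I would argue fiberwise: by the projection formula and properness, $H^0(\SM_\xi, T_q)\cong H^0(\cS\SU_\xi, q_*T_q)$, and $q_*T_q$ is the sheaf whose fiber over $[E]$ is $H^0$ of the tangent bundle of the flag variety $\prod_x\Flag(E|_x)$. The key classical input is that a full flag variety $\Flag(r)=\GL(r)/B$ is a rational homogeneous space whose holomorphic vector fields are exactly the image of the Lie algebra, $H^0(\Flag(r),T_{\Flag(r)})\cong \fg/\fb$ identified with $\mathfrak{sl}_r$ acting infinitesimally; in particular this space is nonzero. So the naive fiberwise vanishing fails, and the real content is that these fiberwise vector fields cannot be assembled into a global section over $\SM_\xi$. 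I would handle this by observing that the infinitesimal $\GL(r)$-action on each fiber flag variety comes from automorphisms of $E|_x$, but a global vertical vector field would amount to a global section over $\cS\SU_\xi$ of the bundle with fiber $\bigoplus_x\mathfrak{sl}(E|_x)$ — equivalently, global trace-free endomorphisms of $E$ along $D$ varying with $E$ — and for a stable bundle $E$ one has $H^0(X,\PE(E))$ reduced to scalars while the only global automorphisms are scalars, which act trivially on the flags. Concretely, the sheaf $q_*T_q$ is associated to the adjoint-type bundle $\bigoplus_{x\in D}(\mathfrak{gl}(E|_x)/\mathfrak b_x)$, and a global section restricting to each fiber to an infinitesimal automorphism of the flags must be induced by a global endomorphism of the universal bundle acting on the marked fibers; stability forces such endomorphisms to be scalar, hence to act trivially on every flag variety, giving $H^0(\SM_\xi,T_q)=0$.

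The main obstacle I expect is precisely this last point: ruling out global vertical vector fields is not formal, because each individual fiber does carry nonzero tangent sections. The argument must genuinely use stability of the underlying bundle to rigidify the flags — that the only endomorphisms of a stable $E$ are scalars, and scalars fix every point of the flag variety. Making the identification of $q_*T_q$ with a descent of the relative adjoint bundle and then invoking $\End(E)=\CC$ for stable $E$ is where care is needed; once that is in place, the sequence above combined with \cite{NR} closes the argument.
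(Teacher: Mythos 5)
Your reduction is exactly the paper's: the forgetful map $f:\SM_\xi\to\cS\SU_\xi$, the exact sequence $0\to T_f\to T_{\SM_\xi}\to f^*T_{\cS\SU_\xi}\to 0$, the vanishing $H^0(\cS\SU_\xi,T_{\cS\SU_\xi})=0$ of \cite[Theorem 1]{NR}, and the projection formula reducing everything to $H^0(\cS\SU_\xi,f_*T_f)$, with $f_*T_f\cong\bigoplus_{x\in D}\op{End}_0(\SE_x)$ because the holomorphic vector fields on the full flag variety $\op{SL}(E_x)/B$ form $\mathfrak{sl}(E_x)$. (Two small slips here: $H^0(\Flag,T_{\Flag})$ is $\mathfrak{sl}_r$, not $\mathfrak{g}/\mathfrak{b}$, which is only the tangent space at one point; and your alternative description of $q_*T_q$ as a bundle with fibres $\bigoplus_x\mathfrak{gl}(E|_x)/\mathfrak{b}_x$ is not well defined over $\cS\SU_\xi$, since the Borel depends on a choice of flag, which is forgotten by $q$. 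Your other formulation, with fibre $\bigoplus_x\mathfrak{sl}(E|_x)$, is the correct one and is what the paper uses, citing Akhiezer.)

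The genuine gap is in your final step. You assert that a global section of $\bigoplus_{x\in D}\op{End}_0(\SE_x)$ over $\cS\SU_\xi$ ``must be induced by a global endomorphism of the universal bundle acting on the marked fibers,'' and then kill it by simplicity of stable bundles. Neither half of this works. A section of $\op{End}_0(\SE_x)$ assigns to each $[E]$ a trace-free endomorphism of the single fibre $E_x$, varying with $[E]$; nothing forces it to extend from the slice $\cS\SU_\xi\times\{x\}$ to $\cS\SU_\xi\times X$, so the simplicity statement $H^0(X,\End E)=\CC$ for stable $E$ imposes no constraint at any point of the moduli space --- indeed the fibre of $\op{End}_0(\SE_x)$ at $[E]$ is all of $\mathfrak{sl}(E_x)$, of dimension $r^2-1$, so there is no fibrewise rigidity to exploit. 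The vanishing $H^0(\cS\SU_\xi,\op{End}_0(\SE_x))=0$ is a genuinely global theorem about the moduli space: it is precisely \cite[Theorem 2]{NR}, which is what the paper invokes at exactly this point (note also that the universal bundle $\SE$ exists because $r$ and $\deg\xi$ are coprime, a hypothesis your argument would need anyway). So your outline is the right one, but the step you yourself flagged as ``where care is needed'' is not a matter of care: the descent-plus-simplicity argument you propose is false as stated, and must be replaced by the citation of \cite[Theorem 2]{NR} or an independent proof of that result.
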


\begin{proof}
Let $f:\SM_\xi \to \cS\SU_\xi$ be the morphism sending a parabolic
bundle to the underlying vector bundle. Since we are assuming that the
weights are small enough, the stability of a parabolic
bundle coincides with the stability of the underlying vector
bundle. Hence this morphism is well defined and it is a projection
with fibers, fixing the weights, consisting of flag varieties giving
us the filtration over each marked point.

The exact sequence over $\SM_\xi$
$$
0\to T_f \to T_{\SM_\xi} \to f^\ast (T_{\cS\SU_\xi}) \to 0
$$
gives an isomorphism $H^0(\SM_\xi,T_{\SM_\xi})\cong H^0(\SM_\xi,
T_f)$, because $T_{\cS\SU_\xi}$ has no global
sections \cite[Theorem 1]{NR} and
$f$ is projective. The projection formula implies
$$
H^0(\SM_\xi, T_f)\cong H^0(\cS\SU_\xi, f_\ast (T_f)).
$$
We claim that $f_\ast(T_f)=End_{0}(\SE_x)$, where $\SE_x$ is the universal
vector bundle on $\cS\SU_\xi\times X$ restricted to the slice
$\cS\SU_\xi \times \{x\}$. Indeed, the fiber of $f$ over a point
corresponding to $E$ is the flag variety of the vector space $E_x$,
the fiber of $E$ over the point $x\in X$. This flag variety
is $\op{SL}(E_x)/B$ where $B$ is a Borel subgroup of $\op{SL}(E_x)$.
We have $H^0(\op{SL}(E_x)/B,T_{\op{SL}(E_x)/B})=\mathfrak{sl}(E_x)$
(\cite[Sec 4.8, Thm 2]{A}),
and therefore $f_\ast(T_f)=End_{0}(\SE_x)$.

Note that the universal vector bundle
exists because $r$ and $\deg(\xi)$ are coprime.
Therefore,
$$
H^0(\cS\SU_\xi, f_\ast (T_f))\cong
H^0(\cS\SU_\xi, End_{0}(\SE_x))=0\, ,
$$
where the last equality is in \cite[Theorem 2]{NR}.
\end{proof}

We can adapt Simpson's Lemma 11.9 in \cite{S} to the parabolic
situation with non zero degree, and then we obtain the following

\begin{lemma}\label{lem:comp}
Let $(E,\Phi)$ be a parabolic Higgs bundle in the nilpotent cone,
with $\Phi\ne 0$. Consider the standard $\CC^{\ast}$--action sending
$(E,\Phi)$ to $(E,t\Phi)$. Assume that $(E,\Phi)$ is a fixed
point. Then there is another Higgs bundle $(E',\Phi')$ in the
nilpotent cone, not isomorphic to $(E, \Phi)$ such that
$\limit_{t\to \infty}(E',t\Phi')=(E,\Phi)$.
\end{lemma}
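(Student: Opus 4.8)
The plan is to combine the structure of $\CC^*$-fixed points with Bialynicki--Birula theory for the action on the smooth variety $\SN_\xi(r,d,\alpha)$, adapting Simpson's argument to the parabolic, nonzero-degree setting. First I would record that a fixed point $(E,\Phi)$ of the action $(E,\Phi)\mapsto(E,t\Phi)$ is a system of Hodge bundles: the integrating isomorphisms $g_t$ (satisfying $g_t\Phi g_t^{-1}=t^{-1}\Phi$) induce a grading $E=\bigoplus_k E_k$ with $\Phi(E_k)\subset E_{k+1}\otimes K(D)$ strongly parabolic, and $\Phi\neq0$ forces at least two nonzero summands, with indices ranging over an interval $[a,m]$, $a<m$. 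I would then note that the assertion is equivalent to showing that the repelling set $W^+(E,\Phi)=\{y:\limit_{t\to\infty}t\cdot y=(E,\Phi)\}$ strictly contains $(E,\Phi)$. Membership of the resulting $(E',\Phi')$ in the nilpotent cone is then automatic: since the Hitchin map is equivariant with positive weights, $h(E',t\Phi')\to h(E,\Phi)=0$ as $t\to\infty$ forces $h(E',\Phi')=0$.

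Next I would analyze the $\CC^*$-weights on the tangent space $T_{(E,\Phi)}\SN_\xi=\mathbb{H}^1(C^\bullet)$, where $C^\bullet=[\PE(E)\xrightarrow{[\Phi,-]}\SPE(E)\otimes K(D)]$ (its trace-free version for fixed determinant). The grading on $E$ splits the complex as $C^\bullet=\bigoplus_k C_k^\bullet$, with $C_k^\bullet$ the internal-degree-$k$ part; tracking the action of $g_t$ together with the extra factor of $t$ on the Higgs field, one finds that $\mathbb{H}^1(C_k^\bullet)$ is pure of $\CC^*$-weight $-k$. Hence the negative-weight part of the tangent space is $T^{<0}=\bigoplus_{k\ge1}\mathbb{H}^1(C_k^\bullet)$, and by Bialynicki--Birula the fibre of $W^+(E,\Phi)$ over $(E,\Phi)$ has dimension $\dim T^{<0}$. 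So the lemma reduces to showing $\mathbb{H}^1(C_k^\bullet)\neq0$ for some $k\ge1$; integrating any such nonzero class (the moduli space is smooth, so the deformation is unobstructed) produces a non-fixed $(E',\Phi')$ flowing to $(E,\Phi)$, necessarily not isomorphic to it since it lies off the fixed locus.

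The heart of the argument, and where I expect the main difficulty, is this nonvanishing. I would extract it from the top internal degree $k=m-a$: for degree reasons $U_{m-a+1}=0$, so $C_{m-a}^\bullet$ has vanishing degree-one term and $\mathbb{H}^1(C_{m-a}^\bullet)=H^1(\PH(E_a,E_m))$. Stability of the Higgs bundle supplies the required slope inequality: the subsum $\bigoplus_{k>a}E_k$ is a $\Phi$-invariant parabolic subbundle, so $\pmu\!\big(\bigoplus_{k>a}E_k\big)<\pmu(E)$ and hence $\pmu(E_a)>\pmu(E)$, while the top piece $E_m$ is $\Phi$-invariant, giving $\pmu(E_m)<\pmu(E)$; therefore $\pmu(E_a)>\pmu(E_m)$ and $\PH(E_a,E_m)$ has negative degree. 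A parabolic Riemann--Roch estimate together with $g\ge2$ then yields $H^1(\PH(E_a,E_m))\neq0$. The delicate points to get right are the precise parabolic Riemann--Roch and Serre-duality bookkeeping, so that the negative degree genuinely forces $h^1>0$ rather than being absorbed by $h^0$, and confirming that this off-diagonal class survives the passage to fixed determinant (which it does, being automatically trace-free since $a\neq m$). These are exactly the places where Simpson's Lemma 11.9 must be modified for the parabolic situation with nonzero degree.
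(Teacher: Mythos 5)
The paper offers no argument for this lemma beyond the remark that Simpson's Lemma 11.9 adapts to the parabolic, nonzero-degree case, and your proposal is precisely that adaptation, so in substance you follow the intended route: the grading $E=\bigoplus_k E_k$ at a fixed point, the weight decomposition of $\mathbb{H}^1$ of the deformation complex $[\PE(E)\to\SPE(E)\otimes K(D)]$ with the negative-weight directions flowing into the fixed point as $t\to\infty$, and nonvanishing extracted from the extremal piece $H^1(X,\PH(E_a,E_m))$ via the stability inequalities $\pmu(E_a)>\pmu(E)>\pmu(E_m)$ (your reduction is sound: for $k=m-a$ the degree-one term of the complex vanishes, the class is automatically trace-free since $a\neq m$, and $[\Phi,\beta]$ lies in internal degree $m-a+1$, which is zero, so the class even integrates to an explicit cocycle deformation, sidestepping any worry about Bialynicki--Birula theory on the non-proper moduli space). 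The step you flag as delicate does check out: $\deg\PH(E_a,E_m)=\rk E_a\deg E_m-\rk E_m\deg E_a-\sum_{x\in D}c_x$, where $c_x$ counts the pairs of weights $(\alpha_i(x),\beta_j(x))$ of $E_a,E_m$ with $\alpha_i(x)>\beta_j(x)$, and $\pmu(E_a)>\pmu(E_m)$ gives $\rk E_a\deg E_m-\rk E_m\deg E_a<\sum_{x}\sum_{i,j}\bigl(\alpha_i(x)-\beta_j(x)\bigr)<\sum_x c_x$ because each positive summand is $<1$ and is counted once in $c_x$; hence $\deg\PH(E_a,E_m)<0$, so $\chi<0$ already for $g\ge 1$ and $h^1>0$ as you claim.
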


These two results combine in the following

\begin{proposition}\label{prop:nilpotentcone}
Let $\SN_\xi(r,d,\alpha)$ be the moduli space of parabolic Higgs
bundles with fixed determinant $\xi$ over a compact Riemann surface
$X$ of genus $g\ge 2$, and let $h_0$ be the corresponding Hitchin
map. Then, there is only one component inside the nilpotent cone
$h_0^{-1}(0)$ which admits no nontrivial $\CC^{\ast}$--action,
and it is the moduli space of parabolic
bundles $\SM_{\xi}(r,d,\alpha)$
with fixed determinant $\xi$ over the compact Riemann
surface $X$.
\end{proposition}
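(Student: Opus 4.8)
The plan is to combine Proposition \ref{prop:novector} and Lemma \ref{lem:comp} as follows. By Lemma \ref{lem:comp}, any irreducible component of the nilpotent cone $h_0^{-1}(0)$ that contains a point $(E,\Phi)$ with $\Phi \ne 0$ must admit a nontrivial $\CC^*$-action: the lemma produces, for each such fixed point, a nearby Higgs bundle flowing to it under the standard action $t\cdot(E,\Phi)=(E,t\Phi)$, which is precisely what prevents the action from being trivial on that component. So the first step is to argue that every component other than $\SM_\xi(r,d,\alpha)$ contains a fixed point with nonzero Higgs field, and hence inherits a nontrivial $\CC^*$-action.

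More carefully, I would argue as follows. The standard $\CC^*$-action on $\SN_\xi$ descends to an action on the nilpotent cone (since $(E,t\Phi)$ is still nilpotent and still has vanishing characteristic data), and it permutes the irreducible components while fixing each of them setwise, because the flow $t\to 0$ or $t\to\infty$ stays within a given component by irreducibility and properness of the limit. On any component $Z \ne \SM_\xi$, some point has $\Phi\ne 0$, and by the usual Bialynicki-Birula / Hitchin-fibration argument the limit $\lim_{t\to 0}(E,t\Phi)=(E,0)$ lies in $\SM_\xi$, while the limit as $t\to\infty$ produces a fixed point $(E_0,\Phi_0)$ inside $Z$ with $\Phi_0\ne 0$ (the field cannot die in the limit without the point leaving $Z$). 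Applying Lemma \ref{lem:comp} at this fixed point then exhibits a genuine family of non-isomorphic Higgs bundles in $Z$ limiting to it, so the induced vector field from the $\CC^*$-action on $Z$ is nonzero; thus $Z$ admits a nontrivial $\CC^*$-action.

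It remains to show that $\SM_\xi(r,d,\alpha)$, embedded via $E\mapsto(E,0)$, admits \emph{no} nontrivial $\CC^*$-action. This is exactly where Proposition \ref{prop:novector} enters: a nontrivial algebraic $\CC^*$-action on a smooth variety produces a nonzero global vector field (the generator of the action), so it suffices to know $H^0(\SM_\xi, T_{\SM_\xi})=0$, which is precisely the content of that proposition under the stated genericity and smallness hypotheses on the weights. Finally I would note that $\SM_\xi$ really is an irreducible component of $h_0^{-1}(0)$: it is irreducible of dimension equal to $\dim\SH_0=\dim\SM_\xi$, which matches the dimension of the Lagrangian components of the nilpotent cone computed in Section \ref{sec:nilpotentcone}, so it is not properly contained in a larger component.

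The main obstacle I expect is the geometric claim in the middle step—that the $t\to\infty$ limit on a component $Z\ne\SM_\xi$ lands on a fixed point with nonzero Higgs field rather than collapsing to the zero-field locus. One must rule out the possibility that a whole component consists of points whose forward limits all have $\Phi_0=0$; irreducibility together with the fact that $\SM_\xi$ is already a full component (so $Z\not\subseteq\SM_\xi$) is the key input, but making this rigorous requires the properness of the limit map and the structure of the fixed-point locus. Once that is secured, the packaging of Lemma \ref{lem:comp} and Proposition \ref{prop:novector} is essentially formal.
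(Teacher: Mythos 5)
Your proposal is essentially the paper's own proof: the paper likewise notes that $\SM_\xi$, embedded by $E\mapsto (E,0)$, is a whole component by the dimension count and admits no nontrivial $\CC^{\ast}$--action because Proposition \ref{prop:novector} kills all global vector fields, and it disposes of every other component by observing that the standard action $(E,\Phi)\mapsto (E,t\Phi)$ restricts to it and is nontrivial by Lemma \ref{lem:comp}. Your additional Bialynicki--Birula elaboration goes beyond what the paper writes and is the only place you slip (the claim $\lim_{t\to 0}(E,t\Phi)=(E,0)$ needs $E$ itself to be parabolic stable, and Lemma \ref{lem:comp} places $(E',\Phi')$ only in the nilpotent cone, not necessarily in the given component $Z$), but these embellishments are not required for the paper's short argument, and you correctly identified where the genuine delicacy lies.
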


\begin{proof}
Recall that the moduli space $\SM_{\xi}$ of parabolic bundles with
fixed determinant is embedded naturally in the moduli space
$\SN_{\xi}$ of parabolic Higgs bundles with fixed determinant just
by sending again $E\mapsto (E,0)$. From Proposition \ref{prop:novector} we
know that $\SM_{\xi}$ has no nontrivial $\CC^{\ast}$--action and
from dimensional computation it is one connected component of the
nilpotent cone. It remains to check that there is no other connected component
where there is no $\CC^{\ast}$--action.

In the rest of the components, with nonzero Higgs field, we have a
$\CC^{\ast}$--action given by $(E,\Phi)\mapsto (E,t\Phi)$.
And this action is nontrivial thanks to lemma \ref{lem:comp}.
\end{proof}

\section{The origin of the Hitchin space.}\label{sec:origin}
We are going to identify the nilpotent cone among the fibers of the Hitchin map.
For each point $s\in \SH$ of the Hitchin space we have a spectral
curve $X_s\subset S$ given by the spectral construction, and this
gives us a family of curves on $S$ parameterized by $\SH$.
Let $s\in \SH$ be a point such that the corresponding spectral curve
$X_s$ is smooth. If we move the point $s$, the spectral curve $X_s$
is deformed. The Kodaira--Spencer map
$$
u:T_s\SH\cong \SH\to H^1(X_s, T_{X_s})
$$
gives, for each vector in
the tangent space $T_s\SH$, the corresponding infinitesimal
deformation of the curve, which is described by an element of
$H^1(X_s,T_{X_s})$.
We will also consider the restriction of $u$ to the traceless Hitchin
space obtaining another Kodaira--Spencer map
$$
u_0:T_s\SH_0 \too H^1(X_s, T_{X_s}).
$$

A tangent vector at a point on the restricted Hitchin space $s\in
\SH_0$ defined by the standard action of $\CC^\ast$ is contained in
the kernel of the restricted Kodaira--Spencer map, because the
standard $\CC^*$--action does not change the isomorphism class
of the spectral curve. On the other hand, in Proposition
\ref{kernel} we show that the dimension of
the kernel of $u_0$ is one. Therefore, the direction defined
by the standard $\CC^*$--action coincides with the kernel of $u_0$.

By lemma \ref{lem:Prym-fibers}, the fiber of the Hitchin map $h_0$
over a point $s$ corresponding to a smooth spectral curve $X_s$ is
isomorphic to the Prim variety $P_s=\op{Prym}(X_s/X)$. Therefore,
if we move $s$ we will get a deformation of $P_s$. In particular,
we get a Kodaira--Spencer map between the infinitesimal deformations
$$
v_0:T_s\SH_0\too H^1(P_s,T_{P_s})
$$
The tangent vector defined by any $\CC^*$--action on $\SH_0$ which
lifts to $\SN_\xi(r,d,\alpha)$ is in the kernel of $v_0$. Indeed,
if the action lifts, it provides an isomorphism among the fibers
over the orbit of the action, hence the infinitesimal deformation
of $P_s$ has to be zero. We will see that a non-trivial
the curve $X_s$ produces a non-trivial deformation of $P_s$, and then we
obtain that the tangent vector defined by any $\CC^*$--action which
lifts to $\SN_\xi(r,d,\alpha)$ is in the kernel of $u_0$.


We use the following
\begin{lemma}
\label{lem:KD}
If $X_s$ is smooth, then there are natural isomorphisms
$$
T_s \wt\SH \;\cong\;H^0(X_s,\pi^*(K^{r}D^{r}))\;\cong\;H^0(X_s,N_{X_s/S}).
$$
\end{lemma}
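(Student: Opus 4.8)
The plan is to establish the two isomorphisms separately, treating the second as essentially formal and concentrating on the first. For the second isomorphism I would simply invoke \eqref{iso1}, which already identifies $N_{X_s/S}$ with $\pi^*(K^rD^r)$ as line bundles on $X_s$; applying the functor $H^0(X_s,-)$ turns this into the stated isomorphism $H^0(X_s,\pi^*(K^rD^r))\cong H^0(X_s,N_{X_s/S})$. So the real content is the first isomorphism $T_s\wt\SH\cong H^0(X_s,\pi^*(K^rD^r))$. Here $\wt\SH=\bigoplus_{i=1}^r H^0(X,K^iD^i)$ is a vector space, so $T_s\wt\SH$ is canonically $\wt\SH$ itself, and it suffices to produce a natural isomorphism $\bigoplus_{i=1}^r H^0(X,K^iD^i)\cong H^0(X_s,\pi^*(K^rD^r))$.

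I would compute the right-hand side by pushing forward along the finite morphism $\pi$. Since $\pi$ is affine, $H^0(X_s,\pi^*(K^rD^r))=H^0(X,\pi_*\pi^*(K^rD^r))$, and the projection formula gives $\pi_*\pi^*(K^rD^r)=(K^rD^r)\otimes\pi_*\SO_{X_s}$. Substituting the decomposition \eqref{isopio} of $\pi_*\SO_{X_s}$ yields
\[
\pi_*\pi^*(K^rD^r)=\bigoplus_{j=0}^{r-1}K^{r-j}D^{r-j}=\bigoplus_{i=1}^r K^iD^i,
\]
and taking global sections recovers $\wt\SH$ exactly. (Smoothness of $X_s$ enters only to guarantee that $\pi_*\SO_{X_s}$ splits as in \eqref{isopio} and that $N_{X_s/S}$ is a line bundle.)

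To see that this abstract identification is the natural (Kodaira--Spencer) one, I would describe the deformation map directly. The family of spectral curves over $\wt\SH$ has $X_s$ as the zero divisor of $\psi=x^r+\sum_{i=1}^r p^*(s_i)x^{r-i}\in H^0(S,p^*(K^rD^r))$, and since $\psi$ depends linearly on $s$, the infinitesimal deformation attached to a tangent vector $\dot s=(\dot s_1,\ldots,\dot s_r)$ is obtained by differentiating the defining section, namely $\dot\psi=\sum_{i=1}^r p^*(\dot s_i)x^{r-i}$, restricted to $X_s$ and viewed in $\SO_S(X_s)|_{X_s}=N_{X_s/S}$. Thus the map sends $\dot s\mapsto\dot\psi|_{X_s}$. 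In the pushforward basis above, the summand indexed by the monomial $x^{r-i}$ is precisely $H^0(X,K^iD^i)$, so reading off coefficients of $\dot\psi$ sends $(\dot s_1,\ldots,\dot s_r)$ to $(\dot s_1,\ldots,\dot s_r)$; the map is the identity, hence an isomorphism.

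The step I expect to require the most care is this last one: verifying that the grading of $\pi_*\SO_{X_s}$ by powers of the tautological section $x$ lines up with the monomials appearing in $\dot\psi$, so that the isomorphism produced formally by the projection formula genuinely coincides with the geometric deformation map. The bookkeeping is made to work by the divisorial identification $N_{X_s/S}=\SO_S(X_s)|_{X_s}=\pi^*(K^rD^r)$ of \eqref{iso1}, which is what pins down the normal direction in terms of the defining equation $\psi$.
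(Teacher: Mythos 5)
Your proposal is correct and follows essentially the same route as the paper: the second isomorphism via \eqref{iso1}, and the first via the projection formula applied to $\pi_*\pi^*(K^rD^r)$ together with the decomposition \eqref{isopio}, identifying $T_s\wt\SH$ with $\wt\SH$ since the latter is a vector space. Your additional verification that the identification agrees with the geometric deformation map (differentiating $\psi$ and reading off monomial coefficients) goes beyond the paper's one-line proof, but it is consistent with how the paper later uses this identification in Proposition \ref{kernel}.
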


\begin{proof}
Using the isomorphisms (\ref{iso1}) and (\ref{isopio}) and the projection
formula, we have
\begin{eqnarray*}
&H^0(X_s,N_{X_s/S}) = H^0(X_s,\pi^*(K^{r}D^{r})) =
H^0(X,K^{r} D^{r}\otimes \pi_*\SO_{X_s}) =&\\
&= H^0(X,\oplus_{i=1}^r (K^{i}D^{i})) = \wt\SH \cong T_s\wt\SH&
\end{eqnarray*}

\end{proof}

The objective now is to calculate the kernels
of $u$ and $u_0$. There are some elements in $H^0(X_s,N_{X_s/S})$ that are
clearly in the kernel. For instance, let
$\lambda\in H^0(X, \SO_X) \cong \CC $,
and denote a point in $X_s\subset S$ by $(\omega,v)$, where
$\omega$ is a point in $X$ and $x$  is a point
in the fiber of $S$ over $\omega$. Then the deformation sending
$(\omega,v)$ to $(\omega,e^\lambda v)$ clearly does not change the
isomorphism class of $X_s$. In fact, this is the deformation
produced by the standard $\CC^*$--action, and it is clearly
in the kernel of the Kodaira--Spencer map $u_0$.

Furthermore, for any $\alpha\in H^0(X,K(D))$, sending $(\omega,
v)$ to $(\omega,v+\alpha(\omega))$
also preserves the isomorphism class of $X_s$. The deformations
defined in this way do not preserve the condition
$0=\tr(\Phi)$ $(=s_1)$, and hence they are in the kernel of $u$,
but not in $T_s\SH_0$. The following proposition
says that these two constructions describe the kernels.

\begin{proposition}
\label{kernel}
The kernel of the Kodaira--Spencer map $\wt u$ is given by the following
exact sequence
$$
0 \too H^0(X,K(D) \oplus \SO_X) \too T_s \wt\SH \stackrel{\wt u}
\too H^1(X_s,T_{X_s})
$$
so it has dimension $g+n+1$ (recall $n=\deg D$).
If we fix the determinant, the restriction of the Kodaira--Spencer map provides an exact sequence
$$
0 \too H^0(X,\SO_X) \too  T_s\wt\SH_0
\stackrel{\wt u_0}{\too} H^1(X_s,T_{X_s}),
$$
and hence $\dim \Ker \wt u_0 =1$.
If we restrict the Kodaira--Spencer
map to $\SH_0$ then we have
$$
0 \too H^0(X,\SO_X) \too  T_s\SH_0
\stackrel{u_0}{\too} H^1(X_s,T_{X_s}),
$$
and hence $\dim \Ker \wt u_0 =1$.

\end{proposition}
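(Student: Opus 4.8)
The plan is to realise all three Kodaira--Spencer maps as a single coboundary homomorphism and then to compute spaces of sections on the spectral curve $X_s$.

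First I would identify $\wt u$ with the connecting map $\delta$ in the long exact sequence of
$$0 \too T_{X_s} \too T_S|_{X_s} \too N_{X_s/S} \too 0,$$
under the isomorphism $T_s\wt\SH \cong H^0(X_s, N_{X_s/S})$ of Lemma \ref{lem:KD}. This is the usual description of the Kodaira--Spencer map for the family of divisors $X_s\subset S$: a first--order change of the defining section $\psi$ restricts to a section of $N_{X_s/S}$, whose image in $H^1(X_s,T_{X_s})$ is the abstract deformation class of $X_s$. Hence $\Ker \wt u$ is exactly the image of the restriction map $H^0(X_s,T_S|_{X_s}) \too H^0(X_s,N_{X_s/S})$, that is, the deformations coming from ambient vector fields.

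Next I would compute $H^0(X_s,T_S|_{X_s})$ from the restriction to $X_s$ of the relative tangent sequence $0 \to \pi^*K(D) \to T_S|_{X_s} \to \pi^*T_X \to 0$. Using the projection formula together with (\ref{isopio}), the horizontal part is $H^0(X_s,\pi^*T_X)=\bigoplus_{i=0}^{r-1}H^0(X,K^{-1-i}D^{-i})$, and every summand has negative degree once $g\ge 2$, so it vanishes; likewise $H^0(X_s,\pi^*K(D))=H^0(X,K(D))\oplus H^0(X,\SO_X)$, the remaining summands again being of negative degree. Thus $H^0(X_s,T_S|_{X_s})\cong H^0(K(D)\oplus\SO_X)$. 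To finish the first exact sequence I would check that the map into $H^0(N_{X_s/S})$ is injective: by (\ref{iso1}) the composite $\pi^*K(D)\inj T_S|_{X_s}\too N_{X_s/S}\cong\pi^*(K^rD^r)$ is multiplication by the restriction of $\partial\psi/\partial x$, which by (\ref{ramification}) is a nonzero section of $\pi^*K^{r-1}D^{r-1}$; since $X_s$ is smooth, hence integral, multiplication by a nonzero section is injective on global sections. Riemann--Roch then evaluates the dimension of the kernel.

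To read off the two summands geometrically I would note that they are the vertical fields on $S$: $H^0(\SO_X)$ is the scaling (Euler) field $x\,\partial/\partial x$ generating the standard $\CC^*$--action, while $H^0(K(D))$ consists of the fibrewise translations $v\mapsto v+\alpha$. The two fixed--determinant statements then follow by tracking the trace $s_1$. A translation by $\alpha$ moves $s_1$, its first--order change being the nonzero section $r\alpha\in H^0(K^1D^1)$, whereas the scaling direction has $\dot s_1=s_1$ and so is tangent to the locus $\{s_1=0\}$. Therefore $\Ker\wt u_0=\Ker\wt u\cap T_s\wt\SH_0$ collapses to the one--dimensional scaling line $H^0(\SO_X)$. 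Finally, since the scaling direction has components $i\,s_i$ with each $s_i\in H^0(K^iD^{i-1})$ already strongly parabolic, it lies in $T_s\SH_0\subset T_s\wt\SH_0$, so $\Ker u_0=\Ker\wt u_0\cap T_s\SH_0=H^0(\SO_X)$ as well.

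The step I expect to be the main obstacle is the middle one: showing that the ambient vector fields are all vertical, i.e. the vanishing $H^0(X_s,\pi^*T_X)=0$ (which is precisely where $g\ge 2$ is used), and correctly identifying the composite $\pi^*K(D)\to N_{X_s/S}$ with multiplication by $\partial\psi/\partial x$ so as to obtain injectivity. Once those two points are secured, separating the three kernels is just bookkeeping with the trace.
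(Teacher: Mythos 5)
Your proposal is correct and follows essentially the same route as the paper's proof: both identify $\wt u$ with the connecting homomorphism of the normal bundle sequence via Lemma \ref{lem:KD}, compute $H^0(X_s,T_S|_{X_s})\cong H^0(X,K(D)\oplus\SO_X)$ from the relative tangent sequence after killing the horizontal summands of negative degree, identify the vertical-to-normal composite with multiplication by $\frac{\partial \psi}{\partial x}\big|_{X_s}$ (the paper settles your flagged obstacle by observing that both maps $T_p|_{X_s}\to N_{X_s/S}$ vanish exactly on the ramification divisor, so they differ by a scalar absorbable into the projection $T_S|_{X_s}\to N_{X_s/S}$), and then cut the kernel down by the trace exactly as in your bookkeeping, including the observation that the scaling direction stays inside $T_s\SH_0$ because $s_i\in H^0(K^iD^{i-1})$. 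The only cosmetic difference is your injectivity argument (multiplication by a nonzero section of a line bundle on the integral curve $X_s$), where the paper instead reads injectivity off the long exact sequence.
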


\begin{proof}
Consider the following diagram, constructed using (\ref{iso1})
\begin{equation}
\label{pd}
\xymatrix{
  & & {p^*T_{X}|_{X_s}}  & & \\
{0} \ar[r] &  {T_{X_s}} \ar[r] &
{T_S|_{X_s}} \ar[rr]\ar@{>>}[u] && {N_{X_s/S}}
\ar[r] & 0 \\
 & & {\rule{20pt}{0pt}T_p|_{X_s}\cong \pi^*K(D)} \ar@{^{(}->}[u]
\ar[rr]^-{\otimes\frac{\partial \psi}{\partial x}\big|^{}_{X_s}}
&& {\pi^*(K^rD^{r})} \ar@{=}[u] &
}
\end{equation}
where $T_p$ denotes the relative tangent bundle for the
projection $p:S\too X$, as in section \ref{sec:hitchinmap}.
Note that the diagram is well defined, since
$\frac{\partial \psi}{\partial x}\big|_{X_s}$ is a section of
$$
\pi^*(K^{r-1}D^{r-1})\cong \SO(R)
$$
(cf.(\ref{ramification})).
The diagram is commutative because the zero scheme
of the two morphisms between the line bundles $T_p|_{X_s}$ and
$N_{X_s/S}$
are the same, namely the ramification divisor, hence the maps
differ by a scalar, but this scalar
can be absorbed in the
projection $T_S|_{X_s}\to N_{X_s/S}$.

Since the tangent line bundle $T_X$ has negative degree,
$H^0(X_s, p^*T_{X}|_{X_s})=0$. Therefore, the middle column
in diagram (\ref{pd}) gives
$$
H^0(X_s, T_S|_{X_s}) \cong H^0(X_s, T_p|_{X_s}) \; .
$$
On the other hand,
$$
H^0(X_s, T_p|_{X_s}) \cong H^0(X_s,\pi^* K(D)) \cong
$$
$$
\cong H^0(X,K(D)\otimes \pi_* \SO_{X_s}) \cong
H^0(X,K(D)\oplus \SO_X)
$$
which, together with lemma \ref{lem:KD},
transforms the long exact sequence given by the middle row
of diagram (\ref{pd}),
$$
0\to H^0(X_s,T_{X_s})\to H^0(X_s,T_S|_{X_s})\to H^0(X_s, N_{X_s/S})\to H^1(X_s,T_{X_s}) \; ,
$$
into
$$
0 \too H^0(X,K(D) \oplus \SO_X) \too T_s\wt\SH
\stackrel{u}{\too} H^1(X_s,T_{X_s}),
$$
where we have used $T_s\wt H\cong H^0(X_s,\pi^*(K^r D^{r}))$.

Now we restrict the Kodaira--Spencer map $u$ to $T_s\wt\SH_0$.
Using the isomorphism $H^0(X_s,\pi^*(K^r D^{r}))\cong
H^0(X,\oplus_{i=1}^r (K^iD^{i}))$ (cf. proof of Lemma \ref{lem:KD}),
an element of this group is written as
$$
\wt a_0 x^r + \wt a_1 x^{r-1} + \cdots + \wt a_r
$$
with $a_i\in H^0(X, K^iD^{i})$, $a_0=1$ and $\wt a_i=\pi^* a_i$.

On the other hand,
an element of $H^{0}(X,K(D)\oplus \SO_X)$ can be written as
$$
\wt b_1 + \wt b_0 x
$$
with $b_i\in H^0(X, K^iD^{i})$ and $\wt b_i=\pi^* b_i$.

The map $\theta$ from one to another is $H^0(\otimes\frac{\partial \psi}{\partial x}\big|_{X_s})$. A short calculation
(using $\psi|_{X_s}=0$) gives
\begin{equation}\label{exfor}
\wt\theta(\wt{b}_1+\wt{b}_0x)=
\sum_{i=1}^{r}
\big( (r-i+1) \wt s_{i-1}\wt b_1 - i \wt s_i \wt b_0\big)x^{r-i}
\end{equation}

The subspace $\wt\SH_0\subset \wt\SH$ is the zero locus of the
trace map sending $(s_1,\ldots,s_r)$ to $s_1$. Then we
have a commutative diagram
$$
\xymatrix{
0 \ar[r] & {T_s\wt\SH_0} \ar[r] & {T_s\wt\SH} \ar[r]^{d(\tr)} &
H^0(X,K(D)) \ar[r] & 0 \\
0 \ar[r] & {H^0(\oplus_{i=2}^n (K^iD^{i}))} \ar[r]\ar[u]^{\cong} &
{H^0(\oplus_{i=1}^n (K^iD^{i}))} \ar[r]^{\qquad p_1} \ar[u]^{\cong} &
H^0(X,K(D)) \ar[r] \ar@{=}[u] & 0
}
$$
where $p_1$ is projection to the first summand.

Now, if $s\in \wt\SH_0$, then $s_1=0$, and using the explicit
formula (\ref{exfor}), we obtain
$$
(d(\tr) \circ \wt\theta) (\wt b_1 + \wt b_0 x) \;=\; r\wt b_1,
$$
and hence the following diagram is commutative
$$
\xymatrix{
& 0  & 0   \\
 & {H^0(K(D))} \ar[u]\ar@{=}[r] & {H^0(K(D))}\ar[u] &  \\
0 \ar[r] & {H^0(K(D) \oplus \SO_X)} \ar[r]^-{\wt\theta} \ar[u]^{q} &
{T_s\wt\SH} \ar[r]^-{\wt u}\ar[u]^{d(\tr)} & {H^1(X_s,T_{X_s})}  \\
0 \ar[r] & {H^0(\SO_X)} \ar[r]^-{\wt\theta_0} \ar[u] & {T_s\wt\SH_0} \ar[r]^-{\wt u_0}\ar[u]
& H^1(X_s,T_{X_s})  \ar@{=}[u]  \\
 & 0\ar[u] & 0\ar[u] & \\
}
$$
where $q$ is projection to the first summand followed by
multiplication by $r$. The top row is the
second exact sequence in the statement of the proposition.

Finally, if the spectral curve corresponds to a strongly parabolic homomorphism, i.e. $s=(s_1,\ldots,s_r)\in \SH_0$ with
$$
s_i\in H^0(K^iD^{i-1}) \subset H^0(K^iD^i) \; ,
$$
then the image of $\wt\theta_0$ lies in
$$
H^0(X,\oplus_{i=2}^r(K^iD^{i-1}))\cong T_s\SH_0
$$
and therefore we get a commutative diagram
$$
\xymatrix{
0 \ar[r] & {H^0(\SO_X)} \ar[r]^-{\wt\theta_0}  & {T_s\wt\SH_0} \ar[r]^-{\wt u_0}
& H^1(X_s,T_{X_s})    \\
0 \ar[r] & {H^0(\SO_X)} \ar[r]^-{\theta_0} \ar@{=}[u] & {T_s\SH_0} \ar[r]^-{u_0}\ar[u]
& H^1(X_s,T_{X_s})  \ar@{=}[u]  \\
}
$$
\end{proof}

\begin{proposition}\label{prop:origin}
  Let $g:\CC^{\ast}\times \SH_0 \too \SH_0$ be an action, having
  exactly one fixed point, and admitting a lift to $\SN_\xi
  (r,d,\alpha)$. Then this fixed point is the origin of $\SH_0$.
\end{proposition}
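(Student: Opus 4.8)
The plan is to pass to the fundamental vector field of the action and to show that it forces the origin to be a fixed point; the uniqueness hypothesis then finishes the argument. First I would denote by $\zeta$ the generating vector field of the algebraic $\CC^*$-action $g$, so that $\zeta(s)=\tfrac{d}{dt}\big|_{t=1}g(t,s)\in T_s\SH_0$. For an algebraic $\CC^*$-action the fixed locus is exactly the zero locus of $\zeta$, so the unique fixed point is the unique zero of $\zeta$; in particular $g$ is nontrivial and $\zeta\not\equiv 0$. It therefore suffices to prove that $0\in\SH_0$ is a fixed point, since by hypothesis $g$ has exactly one. I would also write $\eta$ for the generating vector field of the standard action $t\cdot(s_2,\dots,s_r)=(t^2s_2,\dots,t^rs_r)$, so that $\eta(s)=(2s_2,\dots,rs_r)$, which vanishes only at the origin, and let $U\subset\SH_0$ be the dense open set where the spectral curve $X_s$ is smooth (Lemma \ref{generic}); note that $0\notin U$, as $X_0$ is the nonreduced curve $\{x^r=0\}$.

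Next I would compare $\zeta$ and $\eta$ on $U$. For $s\in U$, the discussion opening Section \ref{sec:origin} shows that, because $g$ lifts to $\SN_\xi(r,d,\alpha)$, the induced infinitesimal deformation of the fiber $P_s=\op{Prym}(X_s/X)$ vanishes, i.e. $\zeta(s)\in\Ker v_0$; and since a nontrivial deformation of $X_s$ yields a nontrivial deformation of $P_s$, the natural map $H^1(X_s,T_{X_s})\to H^1(P_s,T_{P_s})$ is injective, so $\Ker v_0\subseteq\Ker u_0$ and hence $\zeta(s)\in\Ker u_0$. By Proposition \ref{kernel}, $\Ker u_0$ is one-dimensional, and by the construction preceding that proposition it is spanned precisely by the standard direction $\eta(s)$. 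Thus for every $s\in U$ there is a scalar $c(s)$ with
\[
\zeta(s)=c(s)\,\eta(s),
\]
so that $\zeta$ and $\eta$ are everywhere proportional on $U$, where $c$ is a regular function on $U$ (as $\eta$ is nonvanishing there).

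Then I would run an orbit-closure argument. Since $\zeta\not\equiv 0$, the function $c$ is not identically zero, so I may choose $s\in U$ with $c(s)\neq 0$; as $\eta(s)\neq 0$ this gives $\zeta(s)\neq 0$, so $s$ is not $g$-fixed and its orbit $O^g_s$ is one-dimensional. Where $c\neq 0$ the two nonvanishing vector fields $\zeta$ and $\eta$ define the same direction field, so the $g$-orbit and the standard orbit $O^\sigma_s$ through $s$ share a common arc; being irreducible curves they have the same closure, $\overline{O^g_s}=\overline{O^\sigma_s}$. But $\lim_{t\to 0}(t^2s_2,\dots,t^rs_r)=0$, hence $0\in\overline{O^\sigma_s}=\overline{O^g_s}$, while $0\notin O^g_s\subset U$. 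For a $\CC^*$-action the boundary points of an orbit closure are the limits $\lim_{t\to 0}g(t,s)$ and $\lim_{t\to\infty}g(t,s)$, and these are fixed points; therefore $0$ is a fixed point of $g$, and by the uniqueness hypothesis it is the unique one.

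The final orbit-closure step is formal once the proportionality on $U$ is in hand. I expect the substantive input, and the main obstacle, to be the containment $\Ker v_0\subseteq\Ker u_0$: this rests on the infinitesimal Torelli–type statement that a nontrivial deformation of the spectral curve $X_s$ produces a nontrivial deformation of its Prym variety $P_s$, equivalently the injectivity of $H^1(X_s,T_{X_s})\to H^1(P_s,T_{P_s})$. Combined with the one-dimensionality of $\Ker u_0$ from Proposition \ref{kernel}, this is exactly what pins the direction of the lifted action to that of the standard action and makes the comparison with $\eta$ possible; the remainder of the argument is then essentially bookkeeping about $\CC^*$-orbit closures.
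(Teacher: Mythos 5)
Your argument is essentially the paper's own proof (which in turn adapts \cite[Proposition 5.1]{BG}): the lift forces the tangent vector of $g$ at $s\in U$ into $\Ker v_0$; the infinitesimal Torelli theorem for curves together with the \'etale cover $P_s\times \Jac(X)\to \Jac(X_s)$ makes the composite $H^1(X_s,T_{X_s})\to H^1(P_s,T_{P_s})$ injective, so $\Ker v_0=\Ker u_0$; Proposition \ref{kernel} pins this kernel to the span of the standard direction; and the orbit-closure/limit-point argument then puts the origin in the fixed locus. Your vector-field formulation ($\zeta=c\,\eta$ on $U$, choosing $s$ with $c(s)\neq 0$) is in fact slightly more careful than the paper's phrasing, since it explicitly rules out the degenerate case of a $g$-fixed point inside $U$ by using that the zero locus of $\zeta$ is the single fixed point.

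One step, however, you assert without justification: $0\notin O^g_s$ (equivalently $O^g_s\subset U$). From $\overline{O^g_s}=\overline{O^\sigma_s}=O^\sigma_s\cup\{0\}$ alone, the origin could a priori be an \emph{interior} point of the one-dimensional $g$-orbit rather than a boundary point, and an interior orbit point need not be fixed -- so without this step the final conclusion does not follow. The paper closes it with a one-line fiber comparison that you should add: since $g$ lifts to $\SN_\xi(r,d,\alpha)$, all fibers of $h_0$ over a single $g$-orbit are isomorphic, and the fiber over the origin is the nilpotent cone (a singular Lagrangian subscheme containing $\SM_\xi$ as a component), which is not isomorphic to the abelian variety $P_s\cong h_0^{-1}(s)$ for $s\in U$; hence $0$ cannot lie on the orbit through $s$, is a genuine boundary point of its closure, and is therefore fixed. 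With that line inserted, your proof matches the paper's.
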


\begin{proof}
The proof is the same as in \cite[Proposition 5.1]{BG}, so we
only give a sketch, since the details can be found there.

Let $s\in \SH_0$ be a point corresponding to a smooth spectral curve
$X_s$ (it exists by Lemma \ref{generic}).
The tangent vector defined at this point by the standard
action is contained in the kernel of the Kodaira--Spencer map $u_0$,
since the standard action does not change the isomorphism class of the
spectral curve. We are going to prove that the tangent vector
defined by any action that lifts to $\SN_\xi(r,d,\alpha)$ is also in
the kernel of the Kodaira--Spencer map.

Let $J=\textup{Jac}(X)$, $J_s=\textup{Jac}(X_s)$, and
$P_s=\textup{Prym}(X_s/X)$. There is an \'etale covering
$$
\alpha: P_s\times J \to J_s
$$
sending $(L_1,L_2)$ to $L_1\otimes \pi^*L_2$.

Let $g:\CC^*\times \SH_0\to \SH_0$ be an action. Its derivative
gives a tangent vector $w$ at $s$, and the image of $w$
under the Kodaira--Spencer map produces an infinitesimal
deformation of the spectral curve $u_0(w)=\eta_1\in H^1(X_s,T_{X_s})$,
its Jacobian $\eta_2\in H^1(J_s,T_{J_s})$, and the Prym variety
$\eta_3\in H^0(P_s,T_{P_s})$. We have homomorphisms
$$
\xymatrix{
{H^1(X_s,T_{X_s})} \ar@{^{(}->}[r]^-{i} &
{H^1(J_s,T_{J_s})} \ar@{^{(}->}[r]^-{\epsilon}  &
{H^1(J\times P_s,T_{J\times P_s})}\\
& & {H^1(P_s,T_{P_s})} \ar@{^{(}->}[u]\\
}
$$
Indeed, a deformation of $X_s$ produces a deformation of $J_s$, and
the corresponding homomorphism $i$ is injective because of the
infinitesimal version of the classical Torelli theorem for curves
(a non-zero deformation of a curve produces a non-zero deformation of
its Jacobian). On the other hand, an infinitesimal deformation of $J_s$ produces an infinitesimal deformation of its \'etale covering $J\times P_s$. The image of the composition $\epsilon\circ i$
lies in $H^1(P_s,T_{P_s})$, because a deformation of $J\times P_s$ induced by a deformation of $X_s$ (keeping $X$ constant) is induced
by a deformation of the factor $P_s$.

We have $i(\eta_1)=\eta_2$ and $\epsilon(\eta_2)=\eta_3$.
Recall that the fiber of $h_0$ over $s$ is canonically isomorphic
to $P_s$. Now, if the action $g$ lifts to $\SN_\xi(r,d,\alpha)$,
all the fibers above the points of an orbit in $\SH_0$ should
be isomorphic, and therefore we should have $\eta_3=0$.
By the injectivity of $\epsilon$ and $i$, this implies that
$\eta_1=0$, i.e., $u_0(w)=0$, and then the tangent vector
$w$ defined by the action $g$ is in the kernel of the Kodaira--Spencer map.

Now let $g$ be an action of $\CC^*$ on $\SH_0$, which has exactly one
fixed point, and which admits a lift to
$\SN_\xi(r,d,\alpha)$. There is a dense open set $U$ of $\SH_0$
corresponding to smooth spectral curves (Lemma \ref{generic}).
In this open set, each
orbit of $g$ is included in an orbit of the standard action,
since for each point in this open set, the span of the tangent
vector defined by the standard action coincides with the kernel of the
Kodaira--Spencer map, and the tangent vector defined by $g$ is in
this kernel. In particular, the origin, being the fixed point of
the standard action, is a limiting point of all the orbits of
$g$ in $U$, but it is not in the orbits of $g$ (because the fiber
over the origin is not isomorphic to the fibers over $U$). The
limiting points of a $\CC^*$--action are fixed points, therefore
the origin is a fixed point of $g$, and, by hypothesis is the only
fixed point.
\end{proof}

\section{Proof of main theorem}\label{sec:main}

In this section we prove Theorem \ref{maintheorem}.
Since we are going to apply the Torelli theorem for parabolic
bundles in \cite{BBB}, we have to assume that the rank is 2.

\begin{proof}[Proof of Theorem \ref{maintheorem}.]

The Hitchin map gives a surjective morphism from
$\SN_\xi(2,d,\alpha)$ to the vector space $\SH$ called the
\emph{Hitchin space}. The fiber over the origin is called the
nilpotent cone. Observe that $\SM_\xi(2,d,\alpha) \subset
h_0^{-1}(0)$ and has the same dimension as the fiber so, it is one
connected component of this fiber. Actually, we will show that it is
the unique irreducible component inside the nilpotent cone that does
not admit a non-trivial $\CC^{\ast}$--action. Therefore, using the
Torelli theorem in \cite{BBB}, we recover the pointed curve.
This is the idea used in \cite{BG} to
determine intrinsically $\SM_\xi(2,d,\alpha)$ inside
$\SN_\xi(2,d,\alpha)$. We show that it also works in the parabolic
situation.

Note that we are given only the isomorphism class of
$\SN_\xi(2,d,\alpha)$ so we do not have explicitly the Hitchin map.

Consider $Y$ an algebraic variety isomorphic to our moduli space
$\SN_\xi(2,d,\alpha)$ and the natural morphism $Y\to \Spec (\Gamma
(Y))$. Since $Y$ is isomorphic to $\SN_\xi(2,d,\alpha)$ and the fibers
of the Hitchin map are compact by Lemma \ref{lem:Prym-fibers} it
happens that the ring of global functions of $\SN_\xi(2,d,\alpha)$
factorizes through the Hitchin map and
$\Spec(\Gamma(Y))\cong\Spec(\Gamma(\SH_0))$ and hence
$$
\Spec (\Gamma(Y) ) \cong\Spec(\Gamma(\SH_0))\cong
\CC[y_1,y_2,\ldots,y_{3(g-1)+n}].
$$

The following diagram commutes,
$$
\xymatrix{
Y\ar[r]^\alpha \ar[d]^m &\SN_\xi(2,d,\alpha)\ar[d]^{h_0}\\
\mathbb{A}^{3(g-1)+n}\ar[r]_\beta &\SH_0
 }
$$
Let $g:\CC^{\ast}\times \mathbb{A}^{3(g-1)+n} \too
\mathbb{A}^{3(g-1)+n}$ be a $\CC^{\ast}$--action with exactly
one fixed point $y$ and such that it admits a lift to $Y$. We know
that it exists: take, for instance, the standard $\CC^{\ast}$--action on
$\SH_0$ and apply it to $\mathbb{A}^{3(g-1)+n}$ through
the isomorphism $\beta$.

We also know that such an action has as unique fixed point the origin
by Proposition \ref{prop:origin}. Hence, the fiber over $y$ is
isomorphic to the nilpotent cone $h_0^{-1}(0)$. It only remains to use
Proposition \ref{prop:nilpotentcone} to identify $\SM_\xi(2,d,\alpha)$ as
the component inside
the nilpotent cone which does not admit a nontrivial
$\CC^{\ast}$--action. By Theorem 3.2 in \cite{BBB}, from the
isomorphism class of $\SM_\xi(2,d,\alpha)$
we recover $(X,x)$ up to isomorphism.
\end{proof}

\end{document}